\documentclass{amsart}[11pt]

\def\volume{\operatorname{vol}}
\def\vol{\operatorname{vol}}
\def\op{\operatorname}
\def\svolball#1#2{{\volume(\underline B_{#2}^{#1})}}
\def\svolann#1#2{{\volume(\underline A_{#2}^{#1})}}
\def\sball#1#2{{\underline B_{#2}^{#1}}}
\def\svolsp#1#2{{\volume(\partial \underline B_{#2}^{#1})}}

\usepackage[usenames,dvipsnames]{color}
\usepackage{amsmath,amssymb,bm,amsthm, amsfonts, mathrsfs, eucal, epsfig}
\usepackage{graphicx}
\usepackage{url}
\allowdisplaybreaks
\usepackage[top=2.5cm,bottom=2.0cm,left=2.5cm,right=2.5cm]{geometry}

\makeatletter 
\@addtoreset{equation}{section}
\makeatother  

\begin{document}

\newtheorem{Thm}{Theorem}[section]
\newtheorem{Def}{Definition}[section]
\newtheorem{Lem}[Thm]{Lemma}
\newtheorem{Rem}{Remark}[section]

\newtheorem{Cor}[Thm]{Corollary}
\newtheorem{sublemma}{Sub-Lemma}
\newtheorem{Prop}{Proposition}[section]
\newtheorem{Example}{Example}[section]
\newcommand{\g}[0]{\textmd{g}}
\newcommand{\pr}[0]{\partial_r}
\newcommand{\dif}{\mathrm{d}}
\newcommand{\bg}{\bar{\gamma}}
\newcommand{\md}{\rm{md}}
\newcommand{\cn}{\rm{cn}}
\newcommand{\sn}{\rm{sn}}
\newcommand{\seg}{\mathrm{seg}}

\newcommand{\Ric}{\mbox{Ric}}
\newcommand{\Iso}{\mbox{Iso}}
\newcommand{\ra}{\rightarrow}
\newcommand{\Hess}{\mathrm{Hess}}
\newcommand{\RCD}{\mathsf{RCD}}

\title{Improved relative volume comparison for integral Ricci curvature and applications to volume entropy}
\author{Lina Chen}
\address[Lina Chen]{Department of mathematics, Nanjing University, Nanjing China}

\email{chenlina\_mail@163.com}
\thanks{Chen partially supported by the NSFC 12001268 and a research fund from Nanjing University.} 
\author{ Guofang Wei}
\address[Guofang Wei]{Mathematics Department, University of California, Santa Barbara, CA 93106}

\email{wei@math.ucsb.edu}
\thanks{GW partially supported by NSF DMS 1811558 }

\maketitle

\begin{abstract}

\setlength{\parindent}{10pt} \setlength{\parskip}{1.5ex plus 0.5ex
minus 0.2ex} 
We give several  Bishop-Gromov relative volume comparisons with integral Ricci curvature which improve the results in \cite{PW1}. Using one of these volume comparisons, we derive an estimate for the volume  entropy in terms of integral Ricci curvature which substantially improves an earlier estimate in \cite{Au2} and give an application on the algebraic entropy of its fundamental group. We also extend the  almost minimal volume rigidity of \cite{BBCG} to integral Ricci curvature. 
  
  \end{abstract}
\section{Introduction}

Consider a complete Riemannian manifold $M$ with dimension $n$. For each $x\in M$ let $\rho\left( x\right) $ denote the smallest
eigenvalue for the Ricci tensor $\op{Ric} : T_xM\to T_xM$. For a constant $H$, let $\rho_H=\max\{-\rho(x)+(n-1)H, 0\},$ the amount of Ricci curvature lying below $(n-1)H$.  Let
$$k(H, p, R)=\sup_{x\in M}\left(\int_{B_R(x)} \rho_H^p dv\right)^{\frac{1}{p}},$$
$$\bar k(H,p)=\left(\frac{1}{\volume(M)}\int_M \rho_H^p dv\right)^{\frac{1}{p}}= \left(-\kern-1em\int_M \rho_H^p dv\right)^{\frac{1}{p}}, \quad \bar k(H,p, R)= \sup_{x\in M}\left(-\kern-1em\int_{B_R(x)} \rho_H^p dv\right)^{\frac{1}{p}}.$$ 
 Then  $\bar k(H, p)$  measures the average amount of Ricci
 curvature lying below a given bound, in this case,  $(n-1)H$, in the $L^p$ sense. Clearly $\bar k(H, p) = 0$ iff $\Ric_M \ge (n-1)H$. We view $\bar k(H, p)$ as the error term for Ricci curvature bounded below by $(n-1)H$.
 
 For a manifold with Ricci curvature bounded from below, the basic property is the Laplacian and relative volume comparison. Namely if $\op{Ric}_M\geq (n-1)H$, then $\Delta r \le  \underline{\Delta}_H r$, where $r$ is the distance function from a point. Let   $\psi=\max\{\Delta r - \underline{\Delta}_Hr, 0\}$. Then $\op{Ric}_M\geq (n-1)H$ implies $\psi \equiv 0$. In \cite{PW1} Petersen-Wei generalized this result by obtaining an estimate for $\psi$ without any curvature assumption, and thus also gave a generalization of the volume comparison. On the other hand the estimate does not work well for large balls as the error grows when the radius grows.

 In this paper we improve the estimate in \cite[Lemma 2.1]{PW1} which enables us to derive several Bishop-Gromov type relative volume comparisons which are suitable for large balls. In particular the constants in the error term have explicit dependence on the radius $R$ or are  independent of $R$, which is  very useful for big balls. We also express the error term in terms of the average amount of $\rho_H^p$, which is often useful for applications.
 
   Here are our first comparison estimates. Denote  $\svolball{H}{r}$  the volume of the ball with radius $r$ in the $n$-dimensional simply connected Riemannian manifold with constant sectional curvature $H$,  called the model space $\mathbb M^n_H$.
 \begin{Thm} \label{thm-double-R}  Given $n, p> \frac{n}{2}$, $H \le 0$, $R>r>0$,  for any complete Riemannian $n$-manifold $M$ and $x \in M$, we have 
 	\begin{equation}
 	\left(\frac{\volume(B_R(x))}{\svolball{H}{R}}\right)^{\frac1{2p}} - 
\left(\frac{\volume(B_r(x))}{\svolball{H}{r}}\right)^{\frac1{2p}}\le  C(n, p, H, R)\, k(H, p, R)^{\frac12}, \label{eq-double-R}
 	\end{equation}
 	\begin{equation}\frac{\volume(\partial B_R(x))}{\volume(B_R(x))}\leq \frac{\svolsp{H}{R}}{\svolball{H}{R}} + c(n, p) \, \bar k(H, p, R)^{\frac12},   \label{sph-com} \end{equation}
	where $c(n, p)=\left(\frac{(n-1)(2p-1)}{2p-n}\right)^{1/2}$, \ $C(n,p,H,R)= \frac{c(n,p)}{2p} \int_{0}^{R}\left(\frac{1}{\svolball{H}{t}}\right)^{\frac1{2p}}dt$. 
 \end{Thm}
\begin{Rem}
	Note that the constant $C(n,p,H,R)$ in (\ref{eq-double-R})
	is bounded for all $R$ when $H<0$, which is a big improvement of Theorem 1.1 in \cite{PW1}, where the same estimate is obtained, but the constant $C(n, p, H, R)$ goes to infinite as $R \to \infty$ for $H<0$. 
\end{Rem}

 In Theorem~\ref{thm-double-R} the integral of $\rho^p_H$, Ricci curvature error term,  is on the bigger ball, and the comparison estimate is on the smaller or same size ball. Below we give an estimate where the error is on  the smaller ball and the estimate is for all big balls.

 \begin{Thm}\label{main0}
Given $n, p> \frac{n}{2}$, $H \le 0$, $R>r\geq1$, there exists $c(n, p, H)>0, \delta_0=\delta(n, H, p)>0$, such that if a complete Riemannian $n$-manifold $M$ satisfies $\bar k(H, p,1)\leq \delta\leq \delta_0$, then the following holds:
\begin{equation}\frac{\volume(B_R(x))}{\svolball{H}{R}}\leq e^{(R-r)c(n, p, H)\delta^{\frac{1}{2}}}\frac{\volume(B_r(x))}{\svolball{H}{r}}.
\label{rvol-com}
\end{equation}
Moreover,
\begin{equation}\volume(B_R(x))\leq \left(1+c(n, p, H)\delta^{\frac12}\right)e^{(R-1)c(n, p, H)\delta^{\frac12}}\svolball{H}{R}.\label{vol-com}\end{equation}
\end{Thm} 
\begin{Rem}
{\rm (a)} Under the assumption $\bar k(H, p, 1)\leq \delta_0$, the comparison inequalities \eqref{rvol-com} and \eqref{vol-com} hold for $R>r\geq1$. For $1\geq R>r>0$, the relative volume comparison also holds, see the doubling property \eqref{vol-double}. 

{\rm (b)} Theorem~\ref{main0} holds for any fixed $r_0$ instead of $1$, with  $ \delta_0$ and $c$ also depend on $r_0$ and $R>r\geq r_0$, see \eqref{grvol-com} and \eqref{gvol-com} for detail. 
\end{Rem}

 As an immediate application of \eqref{rvol-com}, we get comparison of any two balls when the difference between their radius is bounded such that the error term is independent of the radius.  Namely
\begin{Cor}
Let the assumption be as in Theorem~\ref{main0}, then for each fixed $D>1$ and arbitrary large $L>2D$, we have that 
$$\frac{\volume(B_{L}(x))}{\svolball{H}{L}}\leq e^{c(n, p, H)D\delta^{\frac12}}\frac{\volume(B_{L-D}(x))}{\svolball{H}{L-D}}.$$
\end{Cor} 

See Corollary~\ref{sph-comp} for another version of volume comparison.

Combining  the estimate (\ref{sph-com}) or (\ref{vol-com}) with Aubry's estimate on $ \bar k(H, p, R)$ for the universal cover (see (\ref{univ-rho})), we immediately have the following optimal estimate of the volume entropy for integral Ricci curvature:
 \begin{Thm} \label{main}
	Given $n, \ p>\frac{n}{2}, d>0, H\leq 0$, there exists  $\delta_0=\delta(n, p, d, H)>0,\ c(n,p,H,d) >0$, such that if a compact Riemannian $n$-manifold $M$ satisfies 
	$\bar k(H, p)\leq \delta\leq \delta_0$, $\op{diam}(M)\leq d$ ,
	then the volume entropy of $M$ satisfies that
	\begin{equation}
	h(M)\leq (n-1)\sqrt{-H}+ c(n, p, H,d)\, \delta^{\frac{1}{2}}.  \label{h-est-int}
		\end{equation}
where \begin{equation}
h(M) = \lim_{ R \ra \infty} \frac{\ln \vol (B_R(\tilde{x}))}{R},
\end{equation}
and $B_R(\tilde{x})$ is a ball in $\tilde{M}$, the universal cover of $M$.	
\end{Thm}

Volume entropy is  a fundamental geometric invariant. It is related to the topological entropy, minimal volume, simplicial volume, bottom spectrum of the Laplacian of the universal cover, among others.   For a compact Riemannian $n$-manifold $M$, the limit in (1.6) exists and is independent of the base point $\tilde{x} \in \tilde{M}$ \cite{Ma}. 
\begin{Rem}
{\rm (a)} For $M$ with $\op{Ric}_M\geq (n-1)H$,  Bishop volume comparison gives the upper bound 
\begin{equation*}
h(M)\begin{cases}= 0, & H\geq 0;\\
\leq (n-1)\sqrt{-H}, & H< 0. \end{cases}  \label{h-est}
\end{equation*}

{\rm (b)} For $M$ with integral Ricci curvature bound, $\bar k(H, p)\leq \delta$, 

{\rm (b1)} when $H>0$, Aubry \cite{Au1} showed that the diameter of $M$ is bounded and its fundamental group is finite, hence $h(M)=0$;

{\rm (b2)} when $H\leq 0$, in \cite{Au2}, Aubry derived that
$$h(M)\leq c(n, p)e^{\frac{(n-1)\sqrt{-H} d}{p}}\left(\delta-(n-1)H\right)^{\frac{1}{2}},$$
 where $c(n,p)=2^{\frac{1}{2p}}(n-1)^{\frac{p-1}{2p}}\left(\frac{4p(p-1)}{(2p-1)(2p-n)}\right)^{\frac{p-1}{2p}}3^{\frac{n+1}{2p}}$. 
 Thus for $H\leq 0$,  one only obtains $h(M)\leq (n-1)\sqrt{-H}$ when $\delta\to 0$ and  $p\to \infty$.  From \eqref{h-est-int} we have $h(M)\leq (n-1)\sqrt{-H}$ when $\delta\to 0$  for each $p > \frac n2$.   Therefore our estimate improves the estimate in \cite{Au2}. In fact it is optimal. 
 
{\rm (c)} Paeng \cite{Pa} had a rough estimate of volume entropy under strong extra assumption.

\end{Rem}

As an application of Theorem~\ref{main} we generlize Bessi\`eres, Besson, Courtois and Gallot's  quantitative minimal volume entropy rigidity \cite{BBCG} to integral Ricci curvature case. For a compact Riemannian $n$-manifold $M$ with $\op{Ric}_M\geq -(n-1)$, if there is a degree one continuous map from $M$ to a hyperbolic manifold $Y$, then $\volume(M)\ge \volume(Y)$ and equality holds iff $M$ is isometric to $Y$ (\cite{BCG}, see also Theorem~\ref{min-vol-entr}). A quantive version is given in \cite{BBCG}. 
Using Theorem~\ref{main}, and following the argument as in \cite{BBCG} we extend their result to integral curvature bound.  
\begin{Thm}\label{min-vol}
	Given $n\geq 3, p>\frac{n}2,  d>0$, there exists $\epsilon(n, p, d)>0,\  \delta(n, p, d)>0$, such that for a compact hyperbolic $n$-manifold $Y$,  and for $\epsilon< \epsilon(n, p, d),\ \delta<\delta(n, p, d)$, if a compact Riemannian $n$-manifold $M$ satisfies that there is a degree one continuous map $f: M\to Y$ and 
	$$\bar k(-1, p)\leq \delta,  \quad \op{diam}(M)\leq d, \quad \volume(M)\leq (1+\epsilon)\volume(Y),$$
	then $M$ is diffeomorphic to $Y$ by a $\Psi(\epsilon, \delta | n, p, d)$-isometry, where $\Psi(\epsilon, \delta | n, p, d)$ is a constant depends on $\epsilon, \delta, n, p, d$ such that $\Psi(\epsilon, \delta | n, p, d)\to 0$ as $\epsilon, \delta\to 0$ and $n ,p, d$ fixed.
\end{Thm}

Note that in \cite{BBCG},  the diameter upper bound condition is on $Y$ instead of $M$.  

Theorem~\ref{main} also has an application to the algebraic entropy of the fundamental group of $M$. Recall for a finitely generated group $G$ with generating set $S$ satisfying $\gamma\in S$ if and only if $\gamma^{-1}\in S$, the algebraic entropy of $G$ with respect to the generating set $S$ is given by 
$$h(G, S)=\lim_{R\to\infty}\frac{\ln |G(R, S)|}{R}.$$
where  $|G(R, S)|=\#\{\gamma\in G, \, |\gamma|_S\leq R\}$ and $|\gamma|_S=\inf\{t, \, \gamma_{i_1}\cdots\gamma_{i_t}=\gamma, \gamma_{i_j}\in S, j=1,\cdots, t\}$.
The algebraic entropy of $G$ is $$\quad h(G)=\inf_Sh(G, S).$$ 

For a compact Riemannian $n$-manifold $M$ one can choose a generating set $S_0$ of $\pi_1(M)$ such that $ d(\gamma\tilde x, \tilde x)\leq 2\op{diam}(M)+1$ for all $\gamma\in S_0$. With this generating set, algebraic entropy of $\pi_1(M)$ and the volume entropy of $M$ have the following relation  
\begin{equation}
h(M)\leq h(\pi_1(M), S_0)\leq  (2\op{diam}(M)+1)h(M). \label{two entropies}
\end{equation}
This is essentially in \cite{Sv}, cf. \cite{Mi}, see \cite[Lemma 1.2]{CRXa} for a proof.

We call a compact $n$-manifold $M$ almost non-negative Ricci curvature in the integral sense if there is a sequence of metric $g_i$ on $M$, such that $\bar k_{g_i}(0, p)\op{diam}^2(M, g_i)\to 0$ as $i \rightarrow \infty$. Using Theorem~\ref{main} and \eqref{two entropies}, we have
\begin{Cor}  \label{h-pi}
If a compact $n$-manifold $M$ has almost non-negative Ricci curvature in integral sense, then the algebraic entropy of $\pi_1(M)$, $h(\pi_1(M))=0$.
\end{Cor}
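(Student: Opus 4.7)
The plan is to deduce $h(\pi_1(M))=0$ by combining Theorem~\ref{main} (at $H=0$) with the inequality \eqref{two entropies}, after normalizing each metric $g_i$ to unit diameter. Since the algebraic entropy of $\pi_1(M)$ is the infimum over \emph{all} generating sets, it suffices to exhibit a sequence of generating sets $S_i$ for which $h(\pi_1(M),S_i)\to 0$.

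The crucial observation is that both the hypothesis quantity $\bar k(0,p)\op{diam}^2$ and the product $h(M)\op{diam}(M)$ are scale invariant. Under a homothety $g\mapsto\lambda^2 g$, the Ricci endomorphism, and hence its smallest eigenvalue $\rho$ (and thus $\rho_0$), scales pointwise by $\lambda^{-2}$, while $dv\mapsto\lambda^n dv$ and $\op{vol}(M)\mapsto\lambda^n\op{vol}(M)$; therefore $\bar k(0,p)$ transforms by the factor $\lambda^{-2}$. Meanwhile $\op{diam}\mapsto\lambda\op{diam}$ and $h\mapsto\lambda^{-1}h$. Consequently, setting $\tilde g_i := \op{diam}(M,g_i)^{-2}g_i$ normalizes the diameter to $1$ and rephrases the hypothesis as $\bar k_{\tilde g_i}(0,p)\to 0$.

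For $i$ large enough, $\bar k_{\tilde g_i}(0,p)\leq\delta_0(n,p,0,1)$, so Theorem~\ref{main} with $H=0$ and $d=1$ applies and yields
\[
h(M,\tilde g_i)\leq c(n,p,1,0)\,\bar k_{\tilde g_i}(0,p)^{1/2}\longrightarrow 0.
\]
For each $\tilde g_i$, choose the generating set $S_i\subset\pi_1(M)$ from the discussion preceding \eqref{two entropies}, consisting of those $\gamma$ with $d_{\tilde g_i}(\gamma\tilde x,\tilde x)\leq 2\op{diam}(M,\tilde g_i)+1=3$. Then \eqref{two entropies} gives
\[
h(\pi_1(M),S_i)\leq 3\,h(M,\tilde g_i)\longrightarrow 0,
\]
and since $h(\pi_1(M))\leq h(\pi_1(M),S_i)$ for every $i$, we conclude $h(\pi_1(M))=0$.

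I do not foresee any substantive obstacle: Theorem~\ref{main} does all the real work, and the proof of the corollary is essentially the observation that almost nonnegativity is defined via the precise scale-invariant combination $\bar k(0,p)\op{diam}^2$ one needs in order to normalize to unit diameter and invoke Theorem~\ref{main} with constants uniform in~$i$.
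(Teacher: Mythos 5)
Your proof is correct and follows exactly the route the paper indicates (the paper simply says ``Using Theorem 1.1 and \eqref{two entropies}'' without further detail). The rescaling to unit diameter and the verification that $\bar k(0,p)$, $\op{diam}$, and $h$ scale with the right powers of $\lambda$ are the implicit steps the paper omits, and you have supplied them correctly.
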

\begin{Rem}
{\rm (i)} In fact, by \cite[Corollary 11.17]{BGT} and (\ref{vol-double}), we know that the fundamental groups of manifolds with almost non-negative Ricci curvature in the integral sense are virtually nilpotent.

{\rm (ii)} By Gromov's relation between simplicial volume and minimal volume entropy \cite{Gro}, Theorem~\ref{main} also gives an upper bound on the simplicial volume which improves the estimate in \cite{Au2}. 
\end{Rem}

The paper is organized as follows. In Section 2, we first give the key improvement on the volume ratio estimate in Lemma~\ref{dvol-c}, then give a proof of  Theorem~\ref{thm-double-R}, Theorem~\ref{main0} using the Laplacian comparison estimate in \cite{PW1}. The proofs of Theorem~\ref{main} and Theorem~\ref{min-vol} are in Section 3.


\section{Relative volume comparison estimates for integral Ricci curvature}

To prove Theorems~\ref{thm-double-R} and \ref{main0}, we use the method as in \cite{PW1} and 
the Laplacian comparison proved in there. The key part is to  give an improvement on the estimate of the differential of the volume ratio $\frac{\volume(B_t(x))}{\svolball{H}{t}}$ (see Lemma~\ref{dvol-c}). Then  Theorems~\ref{thm-double-R}  is proved using Laplacian comparison \eqref{lap-com1}. Next the volume doubling  property  is proved, which finishes the proof of Theorem~\ref{main0}.

For a complete Riemannian $n$-manifold $M$, fix $x \in M$, write the volume element $d \vol = \mathcal A (t,\theta)dt \,d\theta$ in term of polar coordinate at $x$. We define $\mathcal A$ to be zero when $t$ increases and $\mathcal A$ becomes undefined.  Let $r =d (x, \cdot)$ be the distance function from $x$ and $\psi =\max\{\Delta r - \underline{\Delta}_Hr, 0\}$, the error in Laplacian comparison. Then

 \begin{Lem}\label{dvol-c}
\begin{equation}\frac{d}{dt}\left(\ln\frac{\volume(B_t(x))}{\svolball{H}{t}}\right)\leq \left(-\kern-1em\int_{B_t(x)} \psi^{2p}\right)^{\frac{1}{2p}}, \label{dvol}\end{equation}
\begin{equation}
\frac{\volume(\partial B_R(x))}{\volume(B_R(x))}\leq \frac{\svolsp{H}{R}}{\svolball{H}{R}}  +  \left(-\kern-1em\int_{B_R(x)} \psi^{2p}\right)^{\frac{1}{2p}}.  \label{area-vol-double}
\end{equation}
\end{Lem}
\begin{Rem}
The key improvement here is that the constant in front of the error term  is $1$. In \cite[Lemma 2.1]{PW1}  the estimate (\ref{dvol})  was given but with constant $c(n, H, t)=\max_{s\in[0, t]}\frac{t\underline{\mathcal{A}}_H(s)}{\int_0^s \underline{\mathcal{A}}_H(\tau)d\tau}$, which goes to infinite as $t \to \infty$ when $H<0$ and $c(n,0,t) = n$.
\end{Rem}

\begin{proof}
Since 	$\frac{d}{dt} {\mathcal A}(t, \theta) = (\Delta r)  \mathcal A(t, \theta)$ and 	$\frac{d}{dt} \underline{\mathcal A}_H(t) = (\underline{\Delta}_H r) \underline{\mathcal A}_H(t)$, 
\[
\frac{d}{dt} \left( \frac{{\mathcal A}(t, \theta)}{\underline{{\mathcal A}}_H(t)}\right)
= (\Delta r-\underline{\Delta}_H r)  \cdot \frac{{\mathcal A}(t, \theta)}{\underline{{\mathcal A}}_H(t)} \le \psi
\cdot \frac{{\mathcal A}(t, \theta)}{\underline{{\mathcal A}}_H(t)}.
\]
Hence 
\begin{eqnarray}
\frac{d}{dt} \left( \frac{\volume(\partial B_t(x))}{\svolsp{H}{t}} \right)  & = & \frac{1}{ \vol S^{n-1}} \int_{S^{n-1}}  \frac{d}{dt} \left( \frac{{\mathcal A}(t, \theta)}{\underline{{\mathcal A}}_H(t)}\right)  d\theta \nonumber\\
& \le &  \frac{1}{\svolsp{H}{t}} \int_{S^{n-1}}  \psi \, {\mathcal
	A}(t,\theta)d\theta.  \label{area-derivative}
\end{eqnarray} 
Integrate \eqref{area-derivative}  with respect to $t$ from $l$ to $r$ and cross multiply,
\begin{equation} \svolsp{H}{l}\volume(\partial B_r(x)) - \svolsp{H}{r}\volume(\partial B_l(x))\leq \svolsp{H}{r}\svolsp{H}{l}\int_l^r\frac{1}{\svolsp{H}{s}}\int_{S^{n-1}}\psi\mathcal{A}d \theta ds.  
\label{area-com} 
\end{equation}
Now integrate with respect to $l$ from $0$ to $r$ gives
\begin{equation}
 \svolball{H}{r}\volume(\partial B_r(x)) - \svolsp{H}{r}\volume(B_r(x))\leq \int_0^r \svolsp{H}{r}\svolsp{H}{l}\int_l^r\frac{1}{\svolsp{H}{s}}\int_{S^{n-1}}\psi\mathcal{A}d \theta\,  ds \, dl.  \label{area-vol-com}
\end{equation}
\begin{eqnarray*}
\frac{d}{dt}\frac{\volume(B_t(x))}{\svolball{H}{t}} &=& \frac{d}{dt}\frac{\int_0^t\volume(\partial B_{\tau}(x))d\tau}{\int_0^{t}\svolsp{H}{\tau}d \tau}\nonumber\\
&=& \frac{\volume(\partial B_t(x))\int_0^t\svolsp{H}{\tau}d\tau -\int_0^t\volume(\partial B_{\tau}(x))d\tau \,  \svolsp{H}{t}}{\left(\svolball{H}{t}\right)^2}\nonumber \\
&\overset{\eqref{area-com}}\leq & \frac{\int_0^t\svolsp{H}{t}\svolsp{H}{\tau}\int_{\tau}^t\frac{1}{\svolsp{H}{s}}\int_{S^{n-1}}\psi \mathcal A(s, \theta)d\theta ds d\tau}{\left(\svolball{H}{t}\right)^2}\nonumber \\
&= & \frac{\int_0^t\int_0^s\int_{S^{n-1}}\svolsp{H}{t}\frac{\svolsp{H}{\tau}}{\svolsp{H}{s}}\psi(s, \theta) \mathcal A(s, \theta)d\theta d\tau ds}{\left(\svolball{H}{t}\right)^2}\nonumber \\
&= & \frac{\svolsp{H}{t}\int_0^t\int_{S^{n-1}}\frac{\svolball{H}{s}}{\svolsp{H}{s}}\psi(s, \theta) \mathcal A(s, \theta)d\theta ds}{\left(\svolball{H}{t}\right)^2}\nonumber \\
&\leq & \frac{\svolsp{H}{t} \left(\max_{s \in [0,t]}\frac{\svolball{H}{s}}{\svolsp{H}{s}} \right) \int_0^t\int_{S^{n-1}}\psi(s, \theta) \mathcal A(s, \theta)d\theta ds}{\left(\svolball{H}{t}\right)^2}\nonumber \\
&\leq & \frac{\svolsp{H}{t} \left(\max_{s \in [0,t]}\frac{\svolball{H}{s}}{\svolsp{H}{s}}\right) \left(\int_0^t\int_{S^{n-1}}\psi^{2p}(s, \theta) \mathcal A(s, \theta)d\theta ds\right)^{\frac1{2p}}\left(\int_0^t\int_{S^{n-1}}\mathcal A(s, \theta)d\theta d s\right)^{1-\frac1{2p}}}{\left(\svolball{H}{t}\right)^2}\nonumber \\
& = & \frac{\svolsp{H}{t}}{\svolball{H}{t}}\left(\max_{s \in[0,t]}\frac{\svolball{H}{s}}{\svolsp{H}{s}}\right) \frac{\volume(B_t(x))}{\svolball{H}{t}}\left(-\kern-1em\int_{B_t(x)} \psi^{2p}\right)^{\frac{1}{2p}}. 
\end{eqnarray*}
Here  we used H\"older's inequality.
Denote
$$f_H(t)=\left(\max_{s \in[0,t]}\frac{\svolball{H}{s}}{\svolsp{H}{s}} \right) \frac{\svolsp{H}{t}}{\svolball{H}{t}}.$$
We have 
\begin{equation*}
\frac{d}{dt}\left(\ln\frac{\volume(B_t(x))}{\svolball{H}{t}}\right)\leq f_H(t) \left(-\kern-1em\int_{B_t(x)} \psi^{2p}\right)^{\frac{1}{2p}}. \label{dvol2}
\end{equation*}

Claim: $f_H(t) \equiv 1$ for all $H \in \mathbb R$ ($t< \frac{\pi}{\sqrt H}$ when $H>0$). 

 For $H=0$, this is obvious. For $H<0$,
let $$g_{-1}(s)=\frac{\int_0^s\sinh^{n-1}\tau d\tau}{\sinh^{n-1}s}.$$
 Then 
 \begin{eqnarray*}
 g'_{-1}(s)&=&\frac{\sinh^{2(n-1)}s-(n-1)\sinh^{n-2}s \cosh s\int_0^s\sinh^{n-1}\tau}{\sinh^{2(n-1)}s}\\
 &=& \frac{\cosh s}{\sinh^{n}s}\left(\frac{\sinh^n s}{\cosh s}-(n-1)\int_0^s \sinh^{n-1}\tau d\tau\right).
\end{eqnarray*}
 Let 
 $$h_{-1}(s)=\frac{\sinh^n s}{\cosh s}-(n-1)\int_0^s \sinh^{n-1}\tau d\tau.$$
 Then
 \begin{eqnarray*}
 h'_{-1}(s)& = & n\sinh^{n-1}s-\frac{\sinh^{n+1}s}{\cosh^2s}-(n-1)\sinh^{n-1}s\\
 &= & \frac{\sinh^{n-1}s}{\cosh^2s}\left(\cosh^2s-\sinh^2s\right)\\
 & =& \frac{\sinh^{n-1}s}{\cosh^2s}\geq0.
  \end{eqnarray*}
 Since $h_{-1}(0)=0$ we have $h_{-1}(s)\geq 0$ and thus $g'_{-1}(s)\geq 0.$
 Hence
 $$f_{-1}(t)=\max_{s\in[0, t]}\frac{\int_0^s\sinh^{n-1}\tau d\tau}{\sinh^{n-1}s}\, \frac{\sinh^{n-1}t}{\int_0^t\sinh^{n-1}t}=1.$$
 Since $f_{H}(t)=f_{-1}(\sqrt{-H}t)$, we have $f_H(t)=1$.
 
 For $H=1, s<\pi/2$, similarly, we have that $h'_{1}(s)=\frac{\sin^{n-1}s}{\cos^2(s)}>0$ and thus $g'_1(s)\geq 0$. Then $f_1=1$. And for $s\in (\pi/2, \pi)$, it follows from the definition directly. Then $f_H(t)=1$ for $H>0$ and $t < \frac{\pi}{\sqrt H}$.
 
Then \eqref{dvol} follows.

To prove (\ref{area-vol-double}), we change $r$ to $R$ in  (\ref{area-vol-com}) and divide by $\svolball{H}{R} \, \volume(B_R(x))$, and do change of integral on the right hand side as above gives
\begin{eqnarray*}
\frac{\volume(\partial B_R(x))}{\volume(B_R(x))} -  \frac{\svolsp{H}{R}}{\svolball{H}{R}}  &	\leq & \frac{\svolsp{H}{R}}{\svolball{H}{R}} \frac{1}{\volume(B_R(x))} \int_0^R \int_0^s \int_{S^{n-1}}\frac{\svolsp{H}{l}}{\svolsp{H}{s}}\psi(s, \theta) \mathcal A(s, \theta)d\theta dl\, ds \\
& \le & \frac{\svolsp{H}{R}}{\svolball{H}{R}}\left(\max_{s \in[0,R]}\frac{\svolball{H}{s}}{\svolsp{H}{s}} \right)  \left(-\kern-1em\int_{B_R(x)} \psi\right) \\
& \leq & f_H(R) \left(-\kern-1em\int_{B_R(x)} \psi^{2p}\right)^{\frac{1}{2p}}.
\end{eqnarray*}
This gives (\ref{area-vol-double}) since $f_H(R)=1$. 
\end{proof}

Now the right hand side of (\ref{dvol}) is controlled by the Laplacian comparison estimate  in \cite[Lemma 2.2]{PW1}.

\begin{Thm} \label{com-int}
	Given $n, p> \frac{n}{2}$, $H\le 0$, $r >0$,  
	\begin{eqnarray*}
	\int_{0}^{r}\left(\psi\right)^{2p}\mathcal A(t, \theta)dt & \leq &  \left(\frac{(n-1)(2p-1)}{2p-n}\right)^p \int_{0}^{r} \rho_H^p\, \mathcal A(t, \theta)dt. 
	\end{eqnarray*}
	This gives
	\begin{equation}
	\left(-\kern-1em\int_{B_r(x)} \psi^{2p}\right)^{\frac{1}{2p}} \le c(n,p) \bar k(H,p, r)^{1/2},
 \label{lap-com1}	\end{equation}
 where $c(n,p) = \left(\frac{(n-1)(2p-1)}{2p-n}\right)^{1/2}$. 
\end{Thm}

Combining the improved estimates in Lemma~\ref{dvol-c} with (\ref{lap-com1}) we can prove Theorem~\ref{thm-double-R}. 
\begin{proof}[Proof of Theorem~\ref{thm-double-R}]
	Rewrite \eqref{dvol} as
	$$\frac{d}{dt}\frac{\volume(B_t(x))}{\svolball{H}{t}}\leq \frac{1}{\left(\svolball{H}{t}\right)^{\frac1{2p}}}\left(\frac{\volume(B_t(x))}{\svolball{H}{t}}\right)^{1-\frac1{2p}}\left(\int_{B_t(x)}\psi^{2p}\right)^{\frac1{2p}}.$$
	Integrate both sides from $r_1$ to $r_2$ gives,
	\begin{equation}\left(\frac{\volume(B_{r_2}(x))}{\svolball{H}{r_2}}\right)^{\frac1{2p}}-\left(\frac{\volume(B_{r_1}(x))}{\svolball{H}{r_1}}\right)^{\frac1{2p}}\leq \frac{1}{2p}  \left(\int_{r_1}^{r_2}\frac{1}{\left(\svolball{H}{t}\right)^{\frac1{2p}}}dt
	\right) \left(\int_{B_{r_2}(x)}\psi^{2p}\right)^{\frac1{2p}}. \label{relative-2p} \end{equation}
This gives for $0<r \le R$, 
\begin{equation}
\left(\frac{\volume(B_R(x))}{\svolball{H}{R}}\right)^{\frac1{2p}} - 
\left(\frac{\volume(B_r(x))}{\svolball{H}{r}}\right)^{\frac1{2p}}\le  c(n, p, H, R)\, \left(\int_{B_{R}(x)}\psi^{2p}\right)^{\frac1{2p}}, \label{pw-improve}
\end{equation}
where 
$$c(n,p,H,R) = \frac{1}{2p} \int_{0}^{R}\left(\frac{1}{\svolball{H}{t}}\right)^{\frac1{2p}}dt.$$ Since $p> \frac n2$ the integral converges at $0$. Moreover 
	\[ c(n,p,H,R)  \left\{ \begin{array}{lll} = & 
 \frac{R^{1-\frac{n}{2p}}}{w^{1/{2p}}_n(2p-n)} & \mbox{when}  \ H=0 \\
 \le &  c(n,p, H) & \mbox{when} \ H< 0  \end{array} \right.	 \] since  when $H<0$, $\lim_{R\to \infty}c(n, p, H, R)$ is finite so $c(n,p,H,R)$ is bounded for all $R$.

	Now apply (\ref{lap-com1}) to (\ref{pw-improve}) gives
	\begin{equation}  \label{eq-relative-k} 
\left(\frac{\volume(B_R(x))}{\svolball{H}{R}}\right)^{\frac1{2p}} - 
\left(\frac{\volume(B_r(x))}{\svolball{H}{r}}\right)^{\frac1{2p}}\le  C(n, p, H, R)\, k(H, p, R)^{\frac12},	\end{equation} with
	\begin{equation}  \label{C}
	C(n, p, H, R) =c(n,p) \cdot c(n, p, H, R).
	\end{equation}	 
This is \eqref{eq-double-R}.
	  
Applying the Laplacian comparison \eqref{lap-com1} to \eqref{area-vol-double}, \eqref{sph-com} follows directly.
\end{proof}	

In order to prove Theorem~\ref{main0} we need a volume doubling estimate. 
In \cite[Theorem 2.1]{PW2}, Petersen-Wei proved the doubling property in manifolds with bounded integral Ricci curvature for $H=0$. Here using (\ref{relative-2p}), we present a proof  of 
doubling property for general $H$ following the argument in \cite[Theorem 2.1]{PW2}.  
Our doubling constant is explicit and much improved. 

\begin{Lem}  \label{doubling}
Given $n, p>\frac{n}{2}, H \le 0$ and $R>0$, if a complete Riemannian $n$-manifold $M$ satisfies $\bar k(H,p, R)\leq \delta$ with $C_1(n, p, H, R)\delta^{\frac12} \le \frac 12 \left(1 - \frac{1}{2^{1/2p+1}-1}\right)$, where $C_1(n,p,H,R)$ is given in (\ref{C1}), then for any $0<r_1< r_2\leq R$,
\begin{equation}
\frac{\volume(B_{r_1}(x))}{\volume(B_{r_2}(x))} \ge \left( \frac{1-2C_1(n, p, H, R)\delta^{\frac12}}{1-C_1(n, p, H, R)\delta^{\frac12}} \right)^{2p} \frac{\svolball{H}{r_1}}{\svolball{H}{r_2}}\ge 
 \frac12\frac{\svolball{H}{r_1}}{\svolball{H}{r_2}}.  \label{vol-double}
\end{equation}
\end{Lem}
\begin{proof}
Multiply both sides of (\ref{relative-2p}) by $\left(\frac{\svolball{H}{r_1}}{\volume(B_{r_2}(x))}\right)^{\frac1{2p}}$, we have  
\begin{equation}\left(\frac{\svolball{H}{r_1}}{\svolball{H}{r_2}}\right)^{\frac1{2p}}-\left(\frac{\volume(B_{r_1}(x))}{\volume(B_{r_2}(x))}\right)^{\frac1{2p}}\leq \frac{1}{2p} \left(\frac{\svolball{H}{r_1}}{\volume(B_{r_2}(x))}\right)^{\frac1{2p}}\left(\int_{r_1}^{r_2}\frac{1}{\left(\svolball{H}{t}\right)^{\frac1{2p}}}dt \right) \left(\int_{B_{r_2}(x)}\psi^{2p}\right)^{\frac1{2p}}. \label{double-error} \end{equation}
Let
$$c(r)= \frac{1}{2p} \left(\frac{\svolball{H}{r}}{\volume(B_{r}(x))}\right)^{\frac1{2p}}\left(\int_{r_1}^{r}\frac{1}{\left(\svolball{H}{t}\right)^{\frac1{2p}}}dt\right) \left(\int_{B_r(x)}\psi^{2p}\right)^{\frac1{2p}}.$$
Then (\ref{double-error}) becomes
$$(1-c(r_2))\left(\frac{\svolball{H}{r_1}}{\svolball{H}{r_2}}\right)^{\frac1{2p}}\leq\left(\frac{\volume(B_{r_1}(x))}{\volume(B_{r_2}(x))}\right)^{\frac1{2p}}.$$
Now we estimate $c(r_2)$. Given any $0<r \le R$, let $r_1 =r,\ r_2=R$ in  \eqref{double-error} or \eqref{relative-2p} and cross product gives, 
\begin{eqnarray*} 
	& &\left(\frac{\svolball{H}{r}}{\volume(B_{r}(x))}\right)^{\frac1{2p}} \\
	& \leq & \left(\frac{\svolball{H}{R}}{\volume(B_{R}(x))}\right)^{\frac1{2p}}\left(1-\frac{1}{2p} \left(\frac{\svolball{H}{R}}{\volume(B_R(x))}\right)^{\frac1{2p}}\int_{r}^{R}\frac{1}{\left(\svolball{H}{t}\right)^{\frac1{2p}}}dt\left(\int_{B_{R}(x)}\psi^{2p}\right)^{\frac1{2p}}\right)^{-1}\nonumber\\
	&\leq &\left(\frac{\svolball{H}{R}}{\volume(B_{R}(x))}\right)^{\frac1{2p}}\left(1-\frac{1}{2p} \left(\int_{0}^{R}\left(\frac{\svolball{H}{R}}{\svolball{H}{t}}\right)^{\frac1{2p}}dt \right) \left(-\kern-1em\int_{B_{R}(x)}\psi^{2p}\right)^{\frac1{2p}}\right)^{-1}\nonumber\\
	&\le & \left(\frac{\svolball{H}{R}}{\volume(B_{R}(x))}\right)^{\frac1{2p}}\left(1-C_1(n, p, H, R)\,  \bar k(H, p, R)^{\frac1{2}}\right)^{-1}, \nonumber
	\end{eqnarray*}
	where \begin{equation} \label{C1}
	C_1(n,p,H,R) = \frac{c(n,p)}{2p} \int_{0}^{R}\left(\frac{\svolball{H}{R}}{\svolball{H}{t}}\right)^{\frac1{2p}}dt.
	\end{equation}
If $\bar k(H, p, R)\leq \delta$, and $C_1(n, p, H, R)\delta^{\frac12} <1$ we have
\begin{equation}\left(\frac{\svolball{H}{r}}{\volume(B_{r}(x))}\right)^{\frac1{2p}}\leq \frac{1}{1-C_1(n, p, H, R)\delta^{\frac12}}\left(\frac{\svolball{H}{R}}{\volume(B_{R}(x))}\right)^{\frac1{2p}}. \label{eq-relative} \end{equation}
Hence
\begin{eqnarray*}c(r_2)&\leq &\frac{1}{1-C_1(n, p, H, R)\delta^{\frac12}}\left(\frac{\svolball{H}{R}}{\volume(B_{R}(x))}\right)^{\frac1{2p}} \left( \frac{1}{2p} \int_{0}^{R}\frac{1}{\left(\svolball{H}{t}\right)^{\frac1{2p}}}dt\right) \left(\int_{B_R(x)}\psi^{2p}\right)^{\frac1{2p}}\\
&\leq &\frac{1}{1-C_1(n, p, H, R)\delta^{\frac12}} \frac{1}{2p} \int_{0}^{R}\left(\frac{\svolball{H}{R}}{\svolball{H}{t}}\right)^{\frac1{2p}}dt\left(-\kern-1em\int_{B_{R}(x)}\psi^{2p}\right)^{\frac1{2p}} \\
&\leq & \frac{C_1(n, p, H, R)\delta^{\frac12}}{1-C_1(n, p, H, R)\delta^{\frac12}}.
\end{eqnarray*}
Here we used Laplacian comparison (\ref{lap-com1}) in the last inequality. 
Now take $\delta$ such that $C_1(n, p, H, R)\delta^{\frac12}\leq \frac 12 \left(1 - \frac{1}{2^{1/2p+1}-1}\right)$ gives $\left( \frac{1-2C_1(n, p, H, R)\delta^{\frac12}}{1-C_1(n, p, H, R)\delta^{\frac12}} \right)^{2p} \ge \frac 12$, which completes the proof. 
\end{proof}

By the volume doubling property Lemma~\ref{doubling} and a simple packing argument, we can compare $\bar k(H, p, r)$ and $\bar k(H, p, R)$ for any $r\leq R$. In fact, in \cite[Page 461]{PW2}, they proved that
\begin{Lem}[\cite{PW2}] \label{pack}
Given $n, p>\frac{n}{2}, H\leq 0$ and $r>0$. Let $\delta=\delta(n, p, H, r)>0$ be as in Lemma~\ref{doubling}. Then if a complete Riemannian $n$-manifold $M$ satisfies $\bar k(H, p, r)\leq \delta$, then for any $R\geq r$, we have that
\begin{equation}\bar k(H, p, R)\leq B(n, H, r)^2 \, \bar k(H, p, r),  \label{c-com}\end{equation}
where $B(n, H, r) = \left(2\frac{\svolball{H}{r}}{\svolball{H}{\frac{r}{2}}}\right)^{\frac{1}{2p}}$. 
\end{Lem} 
Now using \eqref{dvol}, Lemma~\ref{doubling}, Laplacian comparison \eqref{lap-com1} and \eqref{c-com}, we are ready to prove Theorem~\ref{main0}. 

\begin{proof}[Proof of Theorem~\ref{main0}] Let $\delta_0=\delta(n, p, H, r_0)$ be as in Lemma~\ref{doubling}. Let $\bar k(H, p, r_0)\leq \delta_0$. Then by Lemma~\ref{pack}, we have that 
for each $t\geq r_0$, \begin{equation}\bar k(H, p, t)\leq  B(n, H, r_0)^2\bar k(H, p, r_0). \label{cc-com}\end{equation}
Now, by \eqref{dvol},  \eqref{lap-com1} and  \eqref{cc-com}, for each $t\geq r_0$
\begin{eqnarray}
\frac{d}{dt}\left(\ln\frac{\volume(B_t(x))}{\svolball{H}{t}}\right)&\leq& \left(-\kern-1em\int_{B_t(x)} \psi^{2p}\right)^{\frac{1}{2p}} \nonumber\\
&\leq & c(p, n)\, B(n, H, r_0) \bar k(H, p, r_0)^{\frac12},
\end{eqnarray}

Integral it from $r$ to $R$, we derive that
\begin{equation}\frac{\volume(B_R(x))}{\svolball{H}{R}}\leq e^{(R-r)c(n, p)B(n, H, r_0)\bar k(H, p, r_0)^{\frac{1}{2}}}\frac{\volume(B_r(x))}{\svolball{H}{r}}.
\label{grvol-com}
\end{equation}

In \eqref{eq-relative}, let $R=r_0$ and  $r\to 0$, we get that $\frac{\volume(B_{r_0}(x))}{\svolball{H}{r_0}}\leq \left(1-C_1(n, p, H, r_0)\bar k(H, p, r_0)^{\frac12}\right)^{-2p}\leq 1+ 8pC_1(n, p, H, r_0)\bar k(H, p, r_0)^{\frac12}$. And then by \eqref{grvol-com}, we have 
\begin{equation}\volume(B_R(x))\leq \left(1+8pC_1(n, p, H, r_0)\bar k(H, p, r_0)^{\frac12}\right)e^{(R-r_0)c(n, p)B(n, H, r_0)\bar k(H, p, r_0)^{\frac12}}\svolball{H}{R}.\label{gvol-com}\end{equation}
\end{proof}

Using \eqref{rvol-com}  direct calculation gives the following relative annulus:
\begin{Cor}\label{sph-comp}
	Given $n, p>\frac{n}{2}, H\leq 0$, let  $\delta_0=\delta(n, p, H, 1)$ be as in Lemma~\ref{doubling}. Then for any complete Riemannian $n$-manifold $M$ satisfies $\bar k(H, p, 1)\leq \delta\leq \delta_0$, any $R>r\geq1$, we have that
	\begin{equation}\frac{\volume(A_{r, R}(x))}{\volume(B_r(x))}\leq \frac{\svolann{H}{r, R}}{\svolball{H}{r}}+\left(e^{(R-r)c(n, p, H)\delta^{\frac12}}-1\right) \frac{\svolball{H}{R}}{\svolball{H}{r}}.  \label{ann-com} \end{equation}
\end{Cor}

\begin{proof}[Proof of Corollary~\ref{sph-comp}] 

For simplicity, we write $c$ instead of $c(n, p, H)$ in \eqref{rvol-com}. By \eqref{rvol-com},
\begin{eqnarray}
\frac{\volume(A_{r, R}(x))}{\svolann{H}{r, R}} & = & \frac{\volume(B_R(x))-\volume(B_r(x))}{\svolann{H}{r, R}}\nonumber\\
&\leq & \frac{e^{(R-r)c\delta^{\frac12}}\frac{\volume(B_r(x))}{\svolball{H}{r}}\svolball{H}{R}-\volume(B_r(x))}{\svolann{H}{r, R}}\nonumber\\
& = & \frac{\volume(B_r(x))}{\svolball{H}{r}}\, \frac{e^{(R-r)c\delta^{\frac12}}\svolball{H}{R}-\svolball{H}{r}}{\svolann{H}{r, R}}\nonumber\\
& = & \frac{\volume(B_r(x))}{\svolball{H}{r}}\left(1+\left(e^{(R-r)c\delta^{\frac12}}-1\right)\frac{\svolball{H}{R}}{\svolann{H}{r, R}}\right). \label{ann-com}
\end{eqnarray}
\end{proof}

\section{Volume entropy estimate and its application}

\subsection{Volume entropy estimate for integral Ricci curvature}
For a compact Riemannian $n$-manifold $M$, $h(M)$ measures the exponential growth rate of the volumes of balls in its universal cover $\tilde M$.  To estimate $h(M)$ where $M$ has integral Ricci curvature bound, the first thing one should consider is how the curvature condition of $\tilde M$ does.  
While any point wise curvature bounds lift to covering spaces directly, integral curvature bound is not as clear. Aubry \cite{Au2} derived the following relation between integral of function on the base manifold and the integral of its lift to the covering spaces. 

\begin{Thm}\cite[Lemma 1.0.1]{Au2}
Given $n, p>\frac{n}{2}, H\leq 0, d>0$, there is $\delta=\delta(n, p, d, H)>0,\ c(n, H, d)>0$ such that if a compact Riemannian $n$-manifold $M$ satisfies that
$$\bar k(H, p)\leq \delta, \quad \op{diam}(M)\leq d,$$
then for any non-negative function $f$ on $M$, for any normal cover $\bar{\pi}: \bar M\to M$, for any $\bar x\in \bar M$ and $R\geq 3d$, we have that 
$$\frac{1}{3^{n+1}e^{2(n-1)\sqrt{|H|}d}}-\kern-1em\int_M f\leq -\kern-1em\int_{B_R(\bar x)} f\circ \bar{\pi}\leq 3^{n+1}e^{2(n-1)\sqrt{|H|}d}-\kern-1em\int_M f.$$

In particular, if $\tilde M$ is the universal cover of $M$, then for any $\tilde x\in \tilde M$, any $R\geq 3d$, 
\begin{equation}
\left(-\kern-1em\int_{B_R(\tilde x)}\rho_H^p\right)^{\frac1p}\leq c(n, H, d)\, \bar k(H, p).  \label{univ-rho}
\end{equation}
\end{Thm}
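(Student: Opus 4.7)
The plan is to compare the two averages via a Borel fundamental domain on $\bar M$. Since $\op{diam}(M)\le d$, choose a Borel fundamental domain $D\subset\bar M$ for the deck transformation group $\Gamma$ with $\op{diam}(D)\le d$. Then $\bar M=\bigsqcup_{\gamma\in\Gamma}\gamma D$ up to measure zero, $\vol(\gamma D)=\vol(M)$, and since $\bar\pi$ is a local isometry, $\int_{\gamma D}f\circ\bar\pi=\int_M f$ for every $\gamma$. For $\bar y\in\bar M$ and $r>0$ set $I(\bar y,r)=\{\gamma\in\Gamma:\gamma D\cap B_r(\bar y)\ne\emptyset\}$ and $J(\bar y,r)=\{\gamma\in\Gamma:\gamma D\subset B_r(\bar y)\}$. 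The diameter bound on $D$ yields the inclusions
\[
\bigcup_{\gamma\in J(\bar y,R)}\gamma D\ \subset\ B_R(\bar y)\ \subset\ \bigcup_{\gamma\in I(\bar y,R)}\gamma D\ \subset\ B_{R+d}(\bar y),\quad I(\bar y,R-d)\subset J(\bar y,R).
\]
Summing $\int_{\gamma D}f\circ\bar\pi=\int_M f$ over these index sets and dividing by $\vol(B_R(\bar y))$ produces, for every non-negative $f$,
\[
\frac{\vol(B_{R-d}(\bar y))}{\vol(B_R(\bar y))}\, -\kern-1em\int_M f\ \le\ -\kern-1em\int_{B_R(\bar y)} f\circ\bar\pi\ \le\ \frac{\vol(B_{R+d}(\bar y))}{\vol(B_R(\bar y))}\, -\kern-1em\int_M f,
\]
reducing the theorem to the bound $\vol(B_{R+d}(\bar y))/\vol(B_{R-d}(\bar y))\le 3^{n+1}e^{2(n-1)\sqrt{|H|}d}$ whenever $R\ge 3d$.

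To obtain this I would apply the relative volume comparison \eqref{vol-comp} in the non-compact cover $\bar M$. This requires an a priori smallness for the integral Ricci deficit $\bar k_{\bar M}(H,p,r)$ of the lifted metric, which equals $\rho_H\circ\bar\pi$ since $\bar\pi$ is a local isometry. Apply the upper sandwich to $f=\rho_H^p$ at a fixed scale $r_0=2d$: the fundamental domain inclusions give $\int_{B_{r_0}(\bar y)}\rho_H^p\le(\vol(B_{r_0+d}(\bar y))/\vol(M))\int_M\rho_H^p$, while $\vol(B_{r_0}(\bar y))\ge\vol(M)$ for free because $r_0\ge d$ means $B_{r_0}(\bar y)$ contains a full fundamental domain. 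A single application of \eqref{vol-comp} in $\bar M$ with $r\to 0$ bounds $\vol(B_{r_0+d}(\bar y))$ by $\svolball{H}{r_0+d}$ times a factor that depends only on $n,p,H,d$ and on $\bar k(H,p)^{1/(2p)}$, and for $\delta$ small this can be solved to yield $\bar k_{\bar M}(H,p,r_0)\le c(n,H,d)\bar k(H,p)$. Propagating this smallness to every scale via \eqref{big-small-ball} applied in $\bar M$ and then invoking \eqref{vol-comp} once more gives
\[
\frac{\vol(B_{R+d}(\bar y))}{\vol(B_{R-d}(\bar y))}\ \le\ \frac{\svolball{H}{R+d}}{\svolball{H}{R-d}}\bigl(1+c'\bar k(H,p)^{1/(2p)}\bigr)
\]
for every $R$.

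For the model term, a direct computation from the explicit form of $\underline{\mathcal A}_H$ shows that for $H\le 0$ and $R\ge 3d$,
\[
\frac{\svolball{H}{R+d}}{\svolball{H}{R-d}}\ \le\ 3^{n+1}e^{2(n-1)\sqrt{|H|}d},
\]
the exponential factor being the hyperbolic radial growth over a shift of $2d$ and the polynomial contribution $((R+d)/(R-d))^n\le 2^n$ (valid when $R\ge 3d$) being absorbed into $3^{n+1}$. Combining the three displays proves the two-sided bound, and the in-particular statement \eqref{univ-rho} follows by applying the upper bound to $f=\rho_H^p$. The main obstacle is the bootstrap in the second paragraph: integral curvature smallness on $\bar M$ is extracted from volume comparison on $\bar M$, which itself requires integral curvature smallness on $\bar M$. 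The crude lower bound $\vol(B_{2d}(\bar y))\ge\vol(M)$ together with the small-to-large propagation \eqref{big-small-ball} is what breaks this circularity.
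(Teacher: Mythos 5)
First, note that the paper itself does not prove this statement---it is quoted directly from Aubry \cite[Lemma 1.0.1]{Au2}---so there is no in-paper proof to compare against. Your first paragraph, sandwiching $B_R(\bar y)$ between unions of translates of a fundamental domain and reducing the theorem to the bound $\volume(B_{R+d}(\bar y))/\volume(B_{R-d}(\bar y))\leq 3^{n+1}e^{2(n-1)\sqrt{|H|}d}$ for $R\geq 3d$, is correct and is the right reduction (a minor quibble: a Dirichlet domain only gives $\op{diam}(D)\leq 2d$, which degrades the constants but not the structure). The ``in particular'' statement does follow from the upper sandwich applied to $f=\rho_H^p$.

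The gap is in your second paragraph, and it occurs in both halves of the bootstrap. (i) At the fixed scale: applying \eqref{vol-comp} with $r\to 0$ bounds $\volume(B_{r_0+d}(\bar y))$ by $\svolball{H}{r_0+d}$ times $\bigl(1+C\bigl(\int_{B_{r_0+d}(\bar y)}\rho_H^p\bigr)^{1/(2p)}\bigr)^{2p}$, but your fundamental-domain estimate controls $\int_{B_{r_0+d}(\bar y)}\rho_H^p$ only by $\volume(B_{r_0+2d}(\bar y))\,\bar k(H,p)^p$, i.e.\ by the volume of a \emph{strictly larger} ball. The inequality you propose to ``solve'' therefore bounds $\volume(B_{r_0+d})$ in terms of $\volume(B_{r_0+2d})$, not in terms of itself; iterating pushes the radius up by $d$ at every step and the regress never terminates, so smallness of $\delta$ does not rescue it. (ii) At large scale: the claimed estimate $\volume(B_{R+d})/\volume(B_{R-d})\leq(\svolball{H}{R+d}/\svolball{H}{R-d})(1+c'\bar k(H,p)^{1/(2p)})$ with $c'$ independent of $R$ does not follow from \eqref{vol-comp} and \eqref{big-small-ball}: the error in \eqref{vol-comp} involves the \emph{total} integral $\int_{B_R}\rho_H^p$, which grows like $\volume(B_R)$, and after normalization the error term grows at least linearly in $R$. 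This is precisely the deficiency of \eqref{vol-comp} that the paper emphasizes and that its Theorem 1.5 is designed to repair---but Theorem 1.5 and \eqref{big-small} presuppose $\bar k(H,p,1)$ small on the manifold where they are applied, i.e.\ they presuppose \eqref{univ-rho} on $\bar M$, so invoking them here would be circular. What is missing from your sketch is the group-theoretic device that actually breaks the circularity: submultiplicativity of the orbit-counting function $N(r)=|\{\gamma:\, d(\gamma\bar x_0,\bar y)\leq r\}|$ (any $\gamma$ with displacement $\leq a+b$ factors as a product of elements with displacements $\leq a+d$ and $\leq b+d$, using $\op{diam}(M)\leq d$), which collapses the ratio $\volume(B_{R+d})/\volume(B_{R-d})\leq N(R+2d)/N(R-3d)$ for \emph{all} $R$ to a single fixed-scale count $N(cd)$, after which one closed-form application of \eqref{vol-comp} at scale $\sim d$ (where the ball exponent $1/(2p)<1$ and a continuity-in-$R$ argument select the bounded root) finishes the job. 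Without that ingredient the argument does not close.
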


By using \eqref{univ-rho}, the result of Theorem~\ref{main} is a direct application of \eqref{gvol-com} by taking $r_0=3\op{diam}(M)$
 or by \eqref{sph-com}. We give a short proof using \eqref{sph-com}.
 \begin{proof}[Proof of Theorem~\ref{main}] By definition \[
 	h(M) = \lim_{ R \ra \infty} \frac{\ln \vol (B_R(\tilde{x}))}{R}.  \] 
 	We can assume $\lim_{ R \ra \infty} \vol (B_R(\tilde{x})) = \infty$, otherwise $h(M) =0$ and there is nothing to prove. Then 
 	 \[
 	h(M)  =\lim_{R\to \infty}\frac{\volume(\partial B_R(\tilde x))}{\volume(B_R(\tilde x))}.  \]
 	Using \eqref{sph-com}, $$h(M) \le \lim_{R\to \infty} \left( \frac{\svolsp{H}{R}}{\svolball{H}{R}} + c(n, p) \, \bar k(H, p, R)^{\frac12}\right).$$ Since $\lim_{R\to \infty} \frac{\svolsp{H}{R}}{\svolball{H}{R}} = (n-1)\sqrt{-H}$,  and $\bar k(H, p, R) \le c(n, H, d)\, \bar k(H, p)$ for all $R \ge 3d$ by (\ref{univ-rho}), \eqref{h-est-int} follows.
\end{proof}
\subsection{Proof of Theorem~\ref{min-vol}}
 
To prove Theorem~\ref{min-vol}, we will argue by contradiction. Recall that for manifolds with integral curvature bounds, the following
 precompactness results holds.
\begin{Thm}\cite[Corollary 1.3]{PW1} \cite[Theorem 6.0.15]{Au2} \label{compact}
For $n\geq 2, p>\frac{n}{2}, H$, there exists $c(n, p, H)$ such that if a sequence of  compact Riemannian $n$-manifold $M_i$ satisfies that 
$\op{diam}(M_i)^2\bar{k_{M_i}}(H, p)\leq c(p, n,  H)$, then for any normal Riemannian cover $\pi_i: \bar M_i\to M_i$ and any $\bar x_i\in \bar M_i$, the sequence $(\bar M_i, \bar x_i)$ admits a subsequence that converges in the pointed Gromov-Hausdorff topology.
\end{Thm}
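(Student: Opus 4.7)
The plan is to apply Gromov's pointed Gromov--Hausdorff precompactness criterion: to produce a convergent subsequence of $(\bar M_i, \bar x_i)$ it suffices to exhibit, for every fixed $R>0$ and $\epsilon>0$, a uniform upper bound on the number of disjoint $\epsilon$-balls fitting into $B_R(\bar x_i)$. I would deduce this bound from uniform volume doubling on the covers $\bar M_i$ together with a uniform upper bound on $\vol(B_R(\bar x_i))$, both of which come from the integral volume comparison of Section 2 once the $L^p$-smallness of $\rho_H$ has been propagated to balls in the covers.

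First I would normalize by rescaling each $g_i$ to unit diameter; since $\op{diam}^2\,\bar k(H,p)$ controls how the $L^p$ bound transforms under rescaling, choosing $c(n,p,H)$ small enough reduces us to the situation $\op{diam}(M_i)\leq d$ and $\bar k_{g_i}(H,p)\leq \delta$ with $\delta$ as small as desired and with the curvature level $H$ modified only in a controlled way. The key lifting step is then Aubry's tiling inequality \eqref{univ-rho}, which I would apply to each normal cover $\bar \pi_i:\bar M_i\to M_i$ to obtain, for every $\bar x_i\in \bar M_i$ and every $R\geq 3d$,
\[
\left(-\kern-1em\int_{B_R(\bar x_i)}\rho_H^p\,dv\right)^{1/p}\leq c(n,H,d)\,\bar k_{g_i}(H,p)\leq c(n,H,d)\,\delta.
\]

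With this uniform smallness in place, I would apply the relative volume comparison \eqref{vol-comp} on $\bar M_i$ and let the inner radius $r\to 0$ (using the standard fact that $\vol(B_r)/\svolball{H}{r}\to 1$): this gives, after absorbing the $\vol(B_R)^{1/(2p)}$ factor on the right-hand side for $\delta$ small, a uniform absolute bound $\vol(B_R(\bar x_i))\leq C(n,p,H,R)$. The same smallness allows us to invoke Lemma~\ref{doubling} on $\bar M_i$ to obtain uniform volume doubling up to scale $R$. A standard packing argument then bounds the number of disjoint $\epsilon$-balls in $B_R(\bar x_i)$, and Gromov's precompactness theorem combined with a Cantor diagonalization over an exhausting sequence $R_k\to\infty$ yields the desired subsequence.

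The principal technical obstacle is the lifting step itself: unlike pointwise Ricci bounds, the integral quantity $\bar k(H,p)$ is normalized by the total volume of $M$, which is infinite on non-compact covers. Aubry's lemma resolves this by tiling any cover ball of radius $\geq 3d$ with a bounded number of fundamental-domain copies of $M$, so that averaging over a cover ball inherits the averaging over $M$ with a universal multiplicative loss depending only on $n$, $H$, $d$. A secondary subtlety is the interplay between the threshold on $\bar k$ required for doubling at scale $R$ and the uniform smallness provided by the lifting; this is handled by choosing $c(n,p,H)$ small in terms of the working scale and diagonalizing over scales. Once these points are addressed, the proof reduces to a direct application of the Petersen--Wei machinery reviewed in Section 2, with no additional curvature-specific input required.
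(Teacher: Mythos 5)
The paper does not prove this statement; it is quoted directly from \cite[Corollary 1.3]{PW1} and \cite[Theorem 6.0.15]{Au2}, so there is no in-paper proof to compare against. Your outline correctly identifies the key ingredients from those sources: Aubry's tiling inequality \eqref{univ-rho} to transfer the $L^p$-smallness of $\rho_H$ from $M_i$ to balls in the cover, Petersen--Wei's relative volume comparison \eqref{diff-vol}--\eqref{vol-comp}, volume doubling, a packing bound, and Gromov's precompactness criterion with a diagonalization over radii. This is the right skeleton.

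There is, however, a genuine gap in how you handle the $R$-dependence, and a secondary issue with the rescaling step. For the $R$-dependence: to absorb the $\vol(B_R)^{1/2p}$ factor in \eqref{vol-comp}, or to invoke Lemma~\ref{doubling} at scale $R$, you need $\bar k(H,p,R)$ below a threshold $\delta(n,p,H,R,\alpha)$ which, for $H<0$, decays (roughly as $\svolball{H}{R}^{-1/p}$) as $R\to\infty$. But $c(n,p,H)$ in the hypothesis is fixed, independent of $R$. Your remedy, ``choosing $c(n,p,H)$ small in terms of the working scale and diagonalizing over scales,'' is not available: $c(n,p,H)$ must be chosen once and for all, while the quantity permitted to depend on $R$ is the packing bound $N(\epsilon,R)$, not the curvature threshold. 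The correct fix is to note, from \eqref{diff-vol} together with \eqref{lap-com1}, that $\frac{d}{dr}\ln\left(\frac{\vol(B_r)}{\svolball{H}{r}}\right)^{1/2p}\le c(n,H,r)\left(-\kern-1em\int_{B_r}\rho_H^p\right)^{1/2p}$, which after Aubry's lemma integrates to a doubling estimate
\[
\frac{\vol(B_{r_1}(y))}{\svolball{H}{r_1}} \ge \alpha(R,\bar k)\,\frac{\vol(B_{r_2}(y))}{\svolball{H}{r_2}}, \qquad r_1<r_2\le R,
\]
with $\alpha(R,\bar k)>0$ for every $R$ (though $\alpha\to 0$ as $R\to\infty$ once $\bar k$ is fixed). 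That degradation in $R$ is harmless: Gromov's criterion only requires, for each fixed $R$ and $\epsilon$, a bound $N(\epsilon,R)$ on the covering number of $B_R(\bar x_i)$ that is uniform in $i$, and the $R$-dependent $\alpha$ delivers exactly this. With doubling in this form the absolute upper bound on $\vol(B_R(\bar x_i))$ you wanted is unnecessary. For the rescaling step: normalizing $g_i$ to unit diameter multiplies $H$ by $\op{diam}(M_i)^2$, which is uncontrolled if the diameters are unbounded, and moreover precompactness of the rescaled sequence does not transfer back to the original $(\bar M_i,\bar x_i)$. It is cleaner to split into the two cases $\op{diam}(M_i)\le d$ (for some fixed $d$, where Aubry's lemma applies directly) and $\op{diam}(M_i)\to\infty$ with $\bar k_i\to 0$, the latter being handled by a separate limiting argument, rather than collapsing both into one rescaling.
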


 For two compact $n$-manifolds $M_1, M_2$, we say $M_1$ dominates $M_2$ if there is a degree one continuous map $f: M_1\to M_2$. By Theorem~\ref{compact}, we may consider a sequence of Riemannian $n$-manifolds $M_i$ that dominates $Y_i$, a sequence of compact hyperbolic manifolds, and satisfies
\begin{equation}
\bar k_{M_i}(-1, p)\leq \delta_i\to 0,  \quad \frac{\volume(M_i)}{\volume(Y_i)}\leq 1+\epsilon_i\to 1, \quad \op{diam}(M_i)\leq d\label{min-ent}
\end{equation}
and $M_i\overset{GH}\to X$. 
Then by Corollary 1.4 in \cite{PW2},  showing that $X$ is isometric to a compact hyperbolic manifold will give a contradiction we need.

First recall  Besson, Courtois and Gallot's minimal volume entropy estimate \cite{BCG}:
\begin{Thm}\cite[Page 734]{BCG}  
	Let $Y$ be a compact hyperbolic $n$-manifold. For a compact Riemannian $n$-manifold $M$, if there is a continuous map $f: M \to Y$ with degree $k$, then 
	$$h^n(M)\volume(M)\geq k (n-1)^n\volume(Y).$$
	\label{min-vol-entr}
\end{Thm}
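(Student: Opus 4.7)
The plan is to follow the barycenter/natural-map method of Besson--Courtois--Gallot: produce a smooth map $F_s:M\to Y$ homotopic to $f$ whose Jacobian admits a sharp pointwise bound depending on $s$, and then let $s\downarrow h(M)$. First I would lift $f$ to a $\pi_1$-equivariant continuous map $\tilde f:\tilde M\to \mathbb H^n$. For any $s>h(M)$ the integral $\int_{\tilde M}e^{-s\,d_{\tilde M}(x,y)}\,d\volume_{\tilde M}(y)$ converges, so pushing the finite measure $e^{-s\,d_{\tilde M}(x,y)}\,d\volume_{\tilde M}(y)$ forward by the visual map $y\mapsto \theta_y\in\partial\mathbb H^n$ (endpoint of the geodesic ray in $\mathbb H^n$ from $\tilde f(x)$ through $\tilde f(y)$) produces a finite positive measure $\mu_x^s$ on $\partial\mathbb H^n$. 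Define $\widetilde F_s(x)\in\mathbb H^n$ as the Douady--Earle barycenter of $\mu_x^s$, characterized by
\[
\int_{\partial\mathbb H^n}(\nabla B_\theta)(\widetilde F_s(x))\,d\mu_x^s(\theta)=0,
\]
where $B_\theta$ is the Busemann function at $\theta$. Equivariance makes $\widetilde F_s$ descend to a smooth $F_s:M\to Y$, and the straight-line homotopy in $\mathbb H^n$ from $\tilde f$ to $\widetilde F_s$ shows $F_s$ has degree $k$.

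The next step is to implicitly differentiate the defining equation to express $d\widetilde F_s(x)$ in terms of the two symmetric positive endomorphisms of $T_{\widetilde F_s(x)}\mathbb H^n$
\[
H(x)=\int_{\partial\mathbb H^n} dB_\theta\otimes dB_\theta\,d\bar\mu_x^s(\theta),\qquad K(x)=\int_{\partial\mathbb H^n}\Hess B_\theta\,d\bar\mu_x^s(\theta),
\]
where $\bar\mu_x^s=\mu_x^s/\mu_x^s(\partial\mathbb H^n)$ is the normalized probability measure. On $\mathbb H^n$ one has $\Hess B_\theta=g-dB_\theta\otimes dB_\theta$, so $K=I-H$ and $\op{tr} H=1$. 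Applying Cauchy--Schwarz in $L^2(\bar\mu_x^s)$ to the linear identity obtained from the implicit differentiation, and then the algebraic inequality of \cite{BCG} for symmetric $H\ge 0$ with $\op{tr} H=1$, yields the pointwise Jacobian bound
\[
|\det dF_s(x)|\le \left(\frac{s}{n-1}\right)^n.
\]

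Since $F_s$ is homotopic to $f$, of degree $k$, and $Y$ is closed,
\[
k\,\volume(Y)=\int_M F_s^{\ast}d\volume_Y\le \int_M|\det dF_s|\,d\volume_M\le \left(\frac{s}{n-1}\right)^n\volume(M),
\]
and letting $s\downarrow h(M)$ gives the desired $h(M)^n\volume(M)\ge k(n-1)^n\volume(Y)$. The main obstacle is the sharp pointwise Jacobian estimate: the equivariance, homotopy, and integration steps are essentially soft, whereas $|\det dF_s|\le (s/(n-1))^n$ is a genuinely nontrivial matrix inequality that relies in an essential way on the special form $\Hess B_\theta=g-dB_\theta\otimes dB_\theta$ enjoyed by Busemann functions on real hyperbolic space. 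This is the heart of the BCG argument, and is exactly what prevents a routine extension of the proof beyond the hyperbolic (or locally symmetric) target.
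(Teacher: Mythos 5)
The paper does not actually prove this statement; it cites \cite{BCG}. Your outline correctly captures the BCG natural-map strategy, and you rightly identify the sharp Jacobian bound $|\det dF_s|\le(s/(n-1))^n$ (via $\Hess B_\theta=g-dB_\theta\otimes dB_\theta$, $K=I-H$, $\op{tr}H=1$, and the BCG algebraic lemma for positive symmetric $H$ of trace one) as the heart of the argument. However, your construction of the boundary measure is not the one BCG use, and as written it does not close. You take $\mu_x^s$ to be the push-forward of $e^{-s\,d_{\tilde M}(x,y)}\,d\vol(y)$ by the radial projection from $\tilde f(x)$, i.e.\ by $y\mapsto\theta_y(x)$, the endpoint in $\partial\mathbb H^n$ of the ray from $\tilde f(x)$ through $\tilde f(y)$. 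BCG instead convolve with the family of visual (harmonic/Poisson) probability measures $\{\nu_z\}_{z\in\mathbb H^n}$, setting
\[
\mu_x^s=\int_{\tilde M}e^{-s\,d_{\tilde M}(x,y)}\,\nu_{\tilde f(y)}\,d\vol(y),
\qquad
d\mu_x^s(\theta)=\left(\int_{\tilde M}e^{-s\,d_{\tilde M}(x,y)}\,e^{-(n-1)B_\theta(\tilde f(y))}\,d\vol(y)\right)d\nu_o(\theta),
\]
with $\nu_o$ a fixed reference visual measure and $B_\theta(o)=0$.

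The difference is not cosmetic. When you implicitly differentiate the barycenter equation $\int\nabla B_\theta(\widetilde F_s(x))\,d\bar\mu_x^s(\theta)=0$ in $x$, with BCG's measure the only $x$-dependence enters through the weight $e^{-s\,d(x,y)}$, contributing a factor $-s\,\langle\nabla_x d(x,y),\cdot\rangle$ inside the $y$-integral; the subsequent Cauchy--Schwarz step then produces exactly one factor of $s$, which is what yields the sharp constant $(s/(n-1))^n$ and lets $s\downarrow h(M)$ give the theorem. With your measure, the projection base point $\tilde f(x)$ also moves, contributing an additional term proportional to $d\tilde f(\cdot)$ that is not controlled by $s$; moreover $f$ is only assumed continuous, so $d\tilde f$ need not exist, and $y\mapsto\theta_y(x)$ is undefined wherever $\tilde f(y)=\tilde f(x)$. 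So the measure construction is the genuine gap. The remaining steps of your outline---equivariance and smoothness of $\widetilde F_s$, the straight-line homotopy in $\mathbb H^n$ giving $\deg F_s=\deg f=k$, integrating $F_s^{*}d\vol_Y$ over $M$, and letting $s\downarrow h(M)$---are sound.
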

Hence combining \eqref{min-ent} we have \[
h(M_i) \ge (n-1) (1+ \epsilon_i)^{-\frac 1n}. \] On the other hand, 
by Theorem~\ref{main}, we have that
\begin{equation}h(M_i)\leq n-1+ c(n, p, d)\delta_i^{\frac12}. \label{vol-ent-small}\end{equation}
Therefore \begin{equation}
\lim_{i \to \infty} h(M_i) = n-1.  \label{eq-ent}
\end{equation}
 Theorem~\ref{min-vol-entr} and \eqref{vol-ent-small} also give that for $M_i$ in \eqref{min-ent}, $i$ large,
\begin{equation}\volume(M_i)\geq \left(\frac{n-1}{n-1+c\delta_i^{\frac12}}\right)^n\volume(Y_i)\geq v'(n)>0,\label{non-coll}\end{equation}
 where the last inequality follows from the Heintze-Margulis theorem (see e.g. \cite{BGS85}).

Now we want to show that \eqref{non-coll} implies that $M_i$ are uniformly locally noncollapsing  for all $i$ large, i.e.  there is $c(n,d)>0$, such that  for each $r<1$, $x_i\in M_i$, $\volume(B_r(x_i))\geq c(n,d)r^n$. This can be seen by \eqref{vol-double}. In fact, for $i$ large, and $0\leq r_1<r_2\leq d$, $x_i\in M_i$, 
\begin{equation}(1-\Psi(\delta_i | n, p, d))\frac{\svolball{-1}{r_1}}{\svolball{-1}{r_2}}\leq \frac{\volume(B_{r_1}(x_i))}{\volume(B_{r_2}(x_i))},\label{relative}\end{equation}
which implies 
\begin{equation}\volume(B_{r_1}(x_i))\geq (1-\Psi(\delta_i | n, p, d))\frac{\svolball{-1}{r_1}}{\svolball{-1}{d}}v'(n)>0.\label{loc-non-coll}\end{equation}

Recall that in \cite{BCG}, for any $0<c_i\to 0$, there are $C^1$-maps: $F_{c_i}: M_i\to Y_i$, with 
$$\left|\op{Jac}(F_{c_i})\right|\leq \left(\frac{h(M_i)+c_i}{n-1}\right)^n.$$
And then by the same argument as \cite[Lemma 3.13]{BBCG} by using $\volume(M_i)\leq (1+\epsilon_i)\volume(Y_i)$ and \eqref{loc-non-coll}, one can show that

\begin{Lem}[Almost 1-Lipschitz]
Given $n, p>\frac{n}2, d>0$, for $M_i$ as in \eqref{min-ent} and $F_{c_i}$ as above, we have that for $i$ large and $x_{i1}, x_{i2}\in M_i$, 
$$d(F_{c_i}(x_{i1}), F_{c_i}(x_{i2}))\leq (1+\Psi(\delta_i, \epsilon_i, c_i | n, p, d))d(x_{i1}, x_{i2})+\Psi(\delta_i, \epsilon_i, c_i | n, p, d).$$
\label{1-lip}
\end{Lem}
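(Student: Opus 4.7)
The plan is to combine the Besson--Courtois--Gallot pointwise estimates on $\op{Jac}(F_{c_i})$ and $dF_{c_i}$ with the almost-equality $\volume(M_i)\le (1+\epsilon_i)\volume(Y)$ to get an integral smallness statement for $(\|dF_{c_i}\|-1)_+$ on $B_{2R}(x_i)$, and then to convert this integral statement into the pointwise almost $1$--Lipschitz bound via the segment inequality for integral Ricci curvature from \cite{TZ} together with the non-collapsing bound \eqref{loc-non-coll}.

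First, I would note that the BCG construction yields both the stated pointwise bound $|\op{Jac}(F_{c_i})|\le\bigl((h(M_i)+c_i)/(n-1)\bigr)^n$ and a uniform pointwise Lipschitz bound $\|dF_{c_i}\|\le C(n)$. By \eqref{vol-ent-small}, the Jacobian upper bound becomes $1+\Psi_1(\delta_i,c_i\mid n)$. Since $\deg(F_{c_i})=1$,
\[
\volume(Y)=\int_{M_i}\op{Jac}(F_{c_i})\le (1+\Psi_1)\,\volume(M_i)\le (1+\Psi_1)(1+\epsilon_i)\,\volume(Y),
\]
so that $\int_{M_i}\bigl((1+\Psi_1)-\op{Jac}(F_{c_i})\bigr)\le \Psi_2(\delta_i,\epsilon_i,c_i\mid n)\,\volume(M_i)$. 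Using the local non-collapsing \eqref{loc-non-coll} on $B_{2R}(x_i)$ and Chebyshev's inequality, the set
\[
E_i=\bigl\{x\in B_{2R}(x_i):\op{Jac}(F_{c_i})(x)\le 1-\Psi_2^{1/2}\bigr\}
\]
has relative volume $\le \Psi(\delta_i,\epsilon_i,c_i\mid n,p,R)$ in $B_{2R}(x_i)$. Now the BCG quantitative rigidity (arithmetic--geometric mean applied to the singular values of $dF_{c_i}$) forces that, where $\op{Jac}(F_{c_i})$ is near its pointwise upper bound, every singular value of $dF_{c_i}$ is close to $(h(M_i)+c_i)/(n-1)\le 1+\Psi_1$; hence $\|dF_{c_i}(x)\|\le 1+\Psi_3$ for $x\in B_{2R}(x_i)\setminus E_i$.

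To conclude, I would apply the segment inequality of \cite{TZ}, valid under $\bar k(-1,p,1)\le\delta_i$ small via Theorem~\ref{com-int} and the relative volume comparison \eqref{relative}, to the nonnegative function $g=(\|dF_{c_i}\|-(1+\Psi_3))_+$, which is supported on $E_i$ and bounded by $C(n)$, and therefore has small $L^1$-norm on $B_{2R}(x_i)$. The segment inequality then yields, for any $x_{i1},x_{i2}\in B_R(x_i)$, the existence of perturbed endpoints $x_{i1}',x_{i2}'$ with $d(x_{ij},x_{ij}')\le\Psi_4$ and a minimizing geodesic $\gamma\subset B_{2R}(x_i)$ from $x_{i1}'$ to $x_{i2}'$ along which $\int_\gamma g\le\Psi_4$, so that
\[
d\bigl(F_{c_i}(x_{i1}'),F_{c_i}(x_{i2}')\bigr)\le \int_\gamma \|dF_{c_i}(\dot\gamma)\|\,ds\le (1+\Psi_3)\,d(x_{i1}',x_{i2}')+\Psi_4,
\]
and absorbing the endpoint perturbations through the uniform Lipschitz bound $C(n)$ produces the stated estimate with a single error $\Psi(\delta_i,\epsilon_i,c_i\mid n,p,R)$. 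The main obstacle will be this last step: establishing a quantitative segment inequality in the integral Ricci setting, with constants depending only on $n,p,R,d$ and the non-collapsing parameter $v(n)$ from \eqref{loc-non-coll}. This requires combining the relative volume comparison \eqref{relative} and the annular volume comparison Theorem~\ref{area-int-com} with a careful bookkeeping of how the errors from Chebyshev's inequality, the $L^1$-bound on $g$, and the BCG rigidity modulus combine, so that ``bad'' endpoint pairs and the Lipschitz penalty $C(n)$ remain uniformly controlled as $\delta_i,\epsilon_i,c_i\to 0$.
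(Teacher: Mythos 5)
Your proposal is correct and follows essentially the same route the paper (following \cite[Lemma 3.13]{BBCG}) takes: the BCG Jacobian bound combined with $\volume(M_i)\le(1+\epsilon_i)\volume(Y)$ gives $L^1$-smallness of $(\|dF_{c_i}\|-1)_+$, which you then upgrade to the stated pointwise almost-$1$-Lipschitz bound via the Tian--Zhang segment inequality for integral Ricci curvature, the non-collapsing bound \eqref{loc-non-coll}, and the uniform $C^0$-bound $|dF_{c_i}|\le c(n)$ from \cite[Lemma 3.12]{BBCG}. One small imprecision worth flagging: the step from near-maximal Jacobian to $\|dF_{c_i}\|\le 1+\Psi_3$ is not plain AM--GM on the singular values of $dF_{c_i}$ (that alone would permit one large singular value offset by a small one); it relies on the quantitative form of the BCG barycenter linear-algebra lemma, whose rigidity is what actually forces every singular value close to $1$.
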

In the proof of Lemma~\ref{1-lip}, see  \cite[Lemma 3.13]{BBCG}, they used the segment inequality \cite[Theorem 2.11]{CC1}. Here we use the segment inequality \cite[Proposition 2.29]{TZ} and the fact that $|d_{x_i} F_{c_i}|_{C^0}\leq c(n)$ (\cite[Lemma 3.12]{BBCG}).

A corollary of Lemma~\ref{1-lip} is that, $\op{diam}(Y_i)\leq 2d$ for $i$ large. In fact, for each $y_1, y_2\in Y_i$, as $F_{c_i}$ is one degree, there is  $x_1, x_2\in X_i$ such that $F_{c_i}(x_j)=y_j$, $j=1, 2$. Then
$$d(y_1, y_2)\leq (1+\Psi(\delta_i, \epsilon_i, c_i | n, p, d))d(x_{1}, x_{2})+\Psi(\delta_i, \epsilon_i, c_i | n, p, d)\leq 2d.$$

Now by Cheeger's finiteness theorem, without loss of generality, we can assume that $Y_i=Y$, for all $i$, in \eqref{min-ent}.

Let $\psi_i: X\to M_i$ be a $\sigma_i$-Gromov-Hausdorff approximatons, $\sigma_i\to 0$ and let $F_i=F_{c_i}\circ \psi_i : X\to Y$. By passing to a subsequence, $F_i\to F : X\to Y$.   And then Theorem~\ref{min-vol} is derived if $F$ is a isometry:
\begin{Thm}
The map $F : X\to Y$ above is a isometry.
\label{iso}
\end{Thm}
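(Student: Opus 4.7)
The plan is to verify three properties of the limit map $F$ — that it is $1$-Lipschitz, surjective, and satisfies $\vol(X)=\vol(Y)$ — and then promote these to a global isometry by an area-formula argument on the rectifiable limit $X$, paralleling the smooth-case argument of \cite{BBCG}.

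For the $1$-Lipschitz property, I would pass Lemma~\ref{1-lip} to the Gromov--Hausdorff limit. Fix $x,y\in X$ and $R>d(x,y)$; approximate $x,y$ by $\psi_i(x),\psi_i(y)\in B_R(x_i)\subset M_i$. Lemma~\ref{1-lip} then yields
\[
d\bigl(F_i(x),F_i(y)\bigr) \;=\; d\bigl(F_{c_i}(\psi_i(x)),F_{c_i}(\psi_i(y))\bigr) \;\le\; (1+\Psi_i)\,d(\psi_i(x),\psi_i(y)) + \Psi_i,
\]
where $\Psi_i=\Psi(\delta_i,\epsilon_i,c_i\,|\,n,p,R)\to 0$; passing to the limit gives $d(F(x),F(y))\le d(x,y)$. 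Surjectivity of $F$ follows because each $F_{c_i}$ has degree $1$, hence is surjective onto the compact manifold $Y$, while $\psi_i(X)$ is $\sigma_i$-dense in $M_i$ and $|dF_{c_i}|_{C^0}\le c(n)$. Thus every $y\in Y$ is attained by $F_i$ up to error $c(n)\sigma_i$, and by continuous convergence $F(X)=Y$.

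The volume identity $\vol(X)=\vol(Y)$ is established as follows. By Theorem~\ref{main} we may assume \eqref{vol-ent-small}, and the Besson--Courtois--Gallot Jacobian bound gives
\[
|\op{Jac}(F_{c_i})| \;\le\; \Bigl(\tfrac{h(M_i)+c_i}{n-1}\Bigr)^n \;\le\; 1 + o(1).
\]
Combining the degree-one inequality $\vol(Y)\le\int_{M_i}|\op{Jac}(F_{c_i})|\,dV$ with the hypothesis $\vol(M_i)\le(1+\epsilon_i)\vol(Y)$ forces $\vol(M_i)\to\vol(Y)$. On the other hand, the non-collapsed volume convergence of Petersen--Wei (Theorem~1.3 of \cite{PW2}) together with the uniform lower volume bound \eqref{loc-non-coll} gives $\vol(X)=\lim_i\vol(M_i)$, and hence $\vol(X)=\vol(Y)$.

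To conclude that $F$ is an isometry, I would apply the area formula on $X$, whose regular set is dense by Theorem~\ref{dens}. Being $1$-Lipschitz forces $|\op{Jac}(F)|\le 1$ almost everywhere on the regular set, while degree-one surjectivity gives $\int_X|\op{Jac}(F)|\ge\vol(Y)=\vol(X)$; hence $|\op{Jac}(F)|=1$ almost everywhere, so $dF$ is an infinitesimal isometry at almost every regular point. Together with continuity of $F$, connectedness, and smoothness of $Y$, this upgrades $F$ to a global Riemannian isometry. The main obstacle is precisely this last step: rigorously running the area formula on the non-smooth limit $X$ and extracting the global isometric conclusion from "$|\op{Jac}(F)|=1$ a.e." requires the rectifiable structure theory for non-collapsed integral Ricci limits (supplied here by Theorem~\ref{dens} and the volume convergence of \cite{PW2}), which plays the role of the Cheeger--Colding machinery in the smooth setting of \cite{BBCG}.
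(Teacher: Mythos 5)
Your proposal follows the BBCG strategy that the paper also adopts: establish that the limit map $F$ is $1$-Lipschitz via Lemma~\ref{1-lip}, surjective via degree one, then use the Besson--Courtois--Gallot Jacobian bound together with the non-collapsed volume convergence of \cite{PW2} to force $\vol(X)=\vol(Y)$, and finally promote to an isometry by a volume-rigidity argument on the regular part of $X$. This is the same route the paper takes, which it compresses into one line (``a similar argument as in \cite{BBCG} using \eqref{relative} and Theorem~\ref{reg-lim}''). One refinement: the obstacle you flag at the end --- making the area-formula argument rigorous on the non-smooth limit and giving meaning to ``degree one'' for $F:X\to Y$ --- is precisely what Theorem~\ref{reg-lim} is for. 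You cite only the density of the regular set (Theorem~\ref{dens}), but what is actually needed is the full statement: the open connected manifold structure on $\mathring{\mathcal{R}}_{\tau}$ in (4.10.1), the codimension-$2$ estimate $\op{Haus}^{n-2}(\mathcal{S})=0$ in (4.10.2), and the volume convergence in (4.10.3). Together these make the degree of $F$ well-defined, control the singular contribution to the area formula, and identify $\op{Haus}^n(X)$ with $\lim_i\vol(M_i)$. The relative volume comparison \eqref{relative} is what supplies the doubling/Bishop--Gromov input behind these structure results and behind the almost-expanding estimate that BBCG pair with the $1$-Lipschitz bound; it would strengthen your write-up to invoke it explicitly rather than routing everything through the area formula.
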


To prove Theorem~\ref{iso}, we need the following results for the Gromov-Hausdorff limit space $X$ of a sequence of complete Riemannian $n$-manifolds $M_i$ with
\begin{equation}\bar k_{M_i}(H,p, 1)\leq \delta_i\to 0, \quad \volume(B_1(q_i))\geq v>0, \forall q_i\in M_i.\label{non-col}\end{equation}
With \cite[Theorem 2.33]{TZ} and the splitting theorem \cite[page 473]{PW2}, volume comparison (Lemma~\ref{doubling}),  volume convergence \cite[Theorem 1.3]{PW2}, the proof is the same as in \cite{CC2, CC3}. 
\begin{Thm}
{\rm(3.6.1)} The regular set of $X$ is dense, where $x\in X$ is called regular if its tangent cones are isometric to $\Bbb R^n$.

{\rm(3.6.2)} Let $\mathcal{R}_{\tau}$ be the set of points $x\in X$ such that there is $r_0>0$, for $s\leq r_0$, $d_{GH}(B_s(x), \sball{0}{s})\leq \tau s$. Then the interior set of $\mathcal{R}_{\tau}$, $\mathring{\mathcal{R}}_{\tau}$, is open and connected and has a natural smooth manifold structure whose metric is bi-h$\ddot{o}$lder to a smooth Riemannian metric.

{\rm (3.6.3)} The $n-2$ Hausdorff measure $\op{Haus}^{n-2}(\mathcal{S})=0$.

{\rm(3.6.4)} For $x_i\in M_i, x_i\to x\in X$, $r>0$, $\lim_{i\to\infty}\volume(B_r(x_i))=\op{Haus}^n(\volume(B_r(x)))$.
\label{reg-lim}
\end{Thm}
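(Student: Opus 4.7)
The plan is to translate the Cheeger-Colding arguments of \cite{CC2,CC3} to the integral Ricci setting, substituting the pointwise Ricci tools by their integral analogues: the Laplacian and volume comparison of Theorem~\ref{com-int}, the segment inequality \cite[Proposition 2.29]{TZ}, the almost metric cone theorem \cite[Theorem 2.33]{TZ}, and the volume convergence \cite[Theorem 1.3]{PW2}. The non-collapsing hypothesis \eqref{non-col}, combined with the relative volume comparison \eqref{relative} and the localization estimate \eqref{big-small-ball}, yields a uniform lower bound $\volume(B_r(y_i))\ge v(r)>0$ at every $y_i\in M_i$ lying in a fixed bounded region, which is what makes the Cheeger-Colding machinery applicable quantitatively under $\bar k_{M_i}(-1,p,1)\to 0$.

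Part (4.10.3) is the shortest. For $x_i\to x$ the non-collapsing and integral curvature smallness pass to the balls $B_r(x_i)$, and \cite[Theorem 1.3]{PW2} gives $\volume(B_r(x_i))\to\mathcal{H}^n(B_r(x))$ directly. For Part (4.10.1), I would show that near any $x\in\mathring{\mathcal{R}}_\tau$ the Reifenberg condition holds at a uniform scale: combining the definition of $\mathcal{R}_\tau$ with the almost cone theorem and the volume continuity of (4.10.3), every sufficiently small ball at nearby points is $\tau'$-Gromov--Hausdorff close to a Euclidean ball of the same radius. The purely metric Reifenberg theorem of \cite{CC2} then supplies an open, connected topological $n$-manifold structure on $\mathring{\mathcal{R}}_\tau$ together with a bi-H\"older chart. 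To upgrade this to a smooth Riemannian structure I would construct harmonic coordinates using the cut-off functions and the gradient/Hessian estimates of \cite{PW2,DWZ} together with Theorem~\ref{com-int}, following the harmonic radius argument of \cite{CC3}.

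For Part (4.10.2), I would use the Cheeger-Colding stratification $\mathcal{S}^0\subset\mathcal{S}^1\subset\cdots\subset \mathcal{S}^{n-1}=\mathcal{S}$, where $\mathcal{S}^k$ consists of points whose tangent cones split off at most $k$ Euclidean factors, and show by cone-splitting that $\dim_{\mathcal{H}}\mathcal{S}^k\le k$. Since any point with a Euclidean tangent cone lies in $\mathring{\mathcal{R}}_\tau$ for some $\tau$, this gives $\mathcal{S}=\mathcal{S}^{n-1}$ and hence $\op{Haus}^{n-2}(\mathcal{S})=0$. Cone-splitting requires only the almost-splitting theorem, which in turn depends on the segment inequality and the $L^2$ Bochner/Hessian identity applied to a harmonic replacement of a Busemann function.

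The main obstacle is the almost splitting step underlying both the almost cone theorem (used in 4.10.1) and cone-splitting (used in 4.10.2). In the pointwise case \cite{CC1} one converts $L^2$ Hessian bounds on a harmonic approximation of a distance function into pointwise smallness along geodesics via Bochner and the segment inequality. In the integral setting each Bochner application picks up an additional $\rho_H$-error whose $L^p$ norm is controlled by $\bar k(-1,p,1)$, and the segment inequality \cite[Proposition 2.29]{TZ} still converts the resulting $L^1$ quantity into an average bound along geodesics; this is exactly the input used in the almost cone theorem \cite[Theorem 2.33]{TZ}. Once the almost splitting theorem is established with this substitution, the regular/singular decomposition and dimension estimates of Cheeger-Colding carry through without further modification.
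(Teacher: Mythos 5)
Your overall route is the one the paper intends: the paper itself gives no argument beyond citing the integral-curvature substitutes (the almost metric cone theorem \cite[Theorem 2.33]{TZ}, the doubling/volume comparison from \cite{PW2}, and the volume convergence \cite[Theorem 1.3]{PW2}) and asserting that ``the proof is the same as in \cite{CC2, CC3}.'' Your treatment of (4.10.3) via \cite[Theorem 1.3]{PW2} and of (4.10.1) via the intrinsic Reifenberg theorem plus harmonic coordinates is exactly the Cheeger--Colding scheme the authors have in mind, and your identification of the almost splitting theorem (with the Bochner error controlled in $L^p$ by $\bar k(-1,p,1)$ and converted along geodesics by the segment inequality of \cite{TZ}) as the crux is correct.

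There is, however, a genuine gap in your sketch of (4.10.2). From the stratification $\mathcal{S}^0\subset\cdots\subset\mathcal{S}^{n-1}=\mathcal{S}$ and the dimension estimate $\dim_{\mathcal H}\mathcal{S}^k\le k$, the identity $\mathcal{S}=\mathcal{S}^{n-1}$ (which holds by definition of the top stratum) only yields $\dim_{\mathcal H}\mathcal{S}\le n-1$; it does not give $\op{Haus}^{n-2}(\mathcal{S})=0$. The codimension-two statement requires the additional step $\mathcal{S}^{n-1}=\mathcal{S}^{n-2}$, i.e.\ one must rule out points whose tangent cones split off an $\mathbb{R}^{n-1}$ factor without being $\mathbb{R}^n$. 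In the non-collapsed setting such a cone is $\mathbb{R}^{n-1}\times C(Y^0)$ with $Y^0$ zero-dimensional, hence either $\mathbb{R}^n$ or a half-space $\mathbb{R}^{n-1}\times[0,\infty)$ (or a sharper cone), and excluding the half-space is a separate argument in \cite[Section 6]{CC2} (a topological/volume argument showing interior points of a limit of closed manifolds admit no boundary-type tangent cone). You need to verify that this exclusion argument also survives the passage to integral Ricci bounds --- it does, since it rests on volume convergence and the almost-rigidity results you have already imported --- but as written your deduction skips it and proves only the codimension-one bound.
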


 Then a similarly argument as in \cite{BBCG} by using \eqref{relative} and Theorem~\ref{reg-lim} gives Theorem~\ref{iso}.


\end{document}